\theoremstyle{plain} 
\newtheorem{tw}{Theorem}[section]	
\theoremstyle{definition} 
\newtheorem{definition}{Definition}
\newcommand\cA{{\mathcal A}}
\newcommand\cF{{\mathcal F}}
\newcommand\cS{{\mathcal S}}
\newcommand\cM{{\mathcal M}}
\newcommand\fS{{\textfrak{S}}}
\newcommand\bIs{\mathbf{I}_{\rS}}
\newcommand\bI{\mathbf{I}}
\newcommand{\rS}{\mathrm{S}}
\newcommand{\rM}{\mathrm{M}}
\renewcommand\ge{\geqslant}
\renewcommand\le{\leqslant}
\title{Convergence theorems for seminormed fuzzy integrals: Solutions to  Hutn\`ik's open problems}
\author{Michał Boczek
, Marek Kaluszka
\\ 
{\emph{
\small{Institute of Mathematics, Lodz University of Technology, 90-924 Lodz, Poland}}}}
\date{}
\begin{document}
\maketitle

\begin{abstract} 
In this note, we give solutions to Problems $9.4$ and $9.5,$ which were presented by Mesiar and Stup\v{n}anov\'{a} $\cite{mesiar11}$ and by 
Borzová-Molnárová,  Hal\u{c}inová and Hutník, in [{\it The smallest semicopula-based universal integrals I: properties and characterizations,} Fuzzy Sets and Systems (2014),  http://dx.doi.org/
10.1016/j.fss.2014.09.024]. 
\end{abstract}

{\it Keywords: }{Generalized Sugeno integral; Seminormed Sugeno integral; Capacity; Semicopula; Seminorm.}

\section{Introduction}
Let $(X,\cA)$ be a~measurable space, where $\cA$ is a~$\sigma$-algebra of subsets of a non-empty set $X,$ and let $\cS$ be the family of all measurable spaces. The~class of all $\cA$-measurable functions $f\colon X\to [0,1]$ is denoted by $\cF_{(X,\cA)}.$ A~{\it capacity} on $\cA$  is a~non-decreasing set function 
$\mu\colon \cA\to [0,1]$ with $\mu(\emptyset)=0$ and  $\mu(X)=1.$
We denote by $\cM_{(X,\cA)}$ the class of all 
capacities  on $\cA.$  

Suppose that $\rS\colon [0,1]^2\to [0,1]$ is a~semicopula (also called 
a~$t$-{\it seminorm}), i.e., a~non-decreasing function in both coordinates with the neutral element equal to $1.$  It is clear that $\rS(x,y)\le x\wedge y$ 
and $\rS(x,0)=0=\rS(0,x)$ for all $x,y\in [0,1],$ where $x\wedge y=\min(x,y)$ (see $\cite{bas,dur,klement2}$).  We denote the class of all semicopulas
by $\fS$. 
Typical examples of semicopulas are the functions: 
$\rM(a,b)=a\wedge b,$ $\Pi(a,b)=ab,$ $\rS(x,y)=xy(x\vee y)$ and $\rS _L(a,b)=(a+b-1)\vee 0.$ 
Hereafter, $a\wedge b=\min(a,b)$ and $a\vee b=\max(a,b)$.

A~generalized Sugeno integral is defined by
\begin{align*}
\bIs(\mu,f):=\sup_{t\in [0,1]} \rS\big(t,\mu\big(\lbrace f\ge t\rbrace \big)\big),
 \end{align*} 
where $\lbrace f\ge t\rbrace=\lbrace x\in X\colon f(x)\ge t\rbrace,$  $(X,\cA)\in \cS$ and $(\mu, f)\in \cM_{(X,\cA)}\times \cF_{(X,\cA)}.$ The functional $\bIs$ is also called {\it seminormed fuzzy integral} $\cite{suarez,klement3,ouyang3}.$
Replacing semicopula $\rS$ with $\rM$, we get the {\it Sugeno integral} $\cite{sugeno1}$. Moreover, if  $\rS=\Pi,$ then $\bI_{\Pi}$ is called the {\it Shilkret integral} $\cite{shilkret}.$

 \section{Main results}
We present solutions to Problems $9.4$ and $9.5,$  which were posed by Hutník $\cite{mesiar11}$ (see also  $\cite{hutnik2}$, problems 2.18-2.19). 

\begin{definition}[$\cite{hutnik2}$]
Let $(X,\cA)\in\cS,$  $\mu\in \cM_{(X,\cA)},$ $(f_n)_{n=1}^\infty \subset \cF_{(X,\cA)}$ and $f\in\cF_{(X,\cA)}.$
\begin{enumerate}
\item We say that $(f_n)_{n=1}^\infty$ {\it converges in  $\mu$} to $f$ if  $\lim_{n\to \infty} \mu\big(\lbrace |f_n-f|\ge t\rbrace \big)=0$ for every $t\in (0,1].$  We write this as $f_n\xrightarrow{\mu} f.$
\item A sequence $(f_n)_{n=1}^\infty $ {\it converges strictly in  $\mu$} to $f,$ ($f_n\xrightarrow{\text{s}-\mu} f$), if $\lim_{n\to \infty} \mu\big(\lbrace |f_n-f|> 0\rbrace \big)=0.$
\item We say that $(f_n)_{n=1}^\infty $ {\it converges in mean} to $f$ with respect to the integral $\bIs$, 
($f_n\xrightarrow{\bIs} f$), if  $\lim_{n\to \infty}\bIs\big(\mu,|f_n-f|\big)=0.$
\end{enumerate}
\end{definition}
\medskip

 {\bf Problem 9.4} {\it
Characterize all the capacities for which strict convergence in measure is equivalent to convergence in measure on any measurable space.
}

\begin{tw}\label{tw1}
If $f_n\xrightarrow{s-\mu} f,$ then  $f_n\xrightarrow{\mu} f$  
for all $(X,\cA)\in\cS$, all  $\mu\in \cM_{(X,\cA)}$ and all $f,f_n\in \cF_{(X,\cA)}.$ The reverse implication is not true. 
\end{tw}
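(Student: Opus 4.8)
The plan is to prove the forward implication directly from the definitions and then exhibit a single counterexample refuting the converse.

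For the forward direction, I would argue as follows. Fix $(X,\cA)\in\cS$, $\mu\in\cM_{(X,\cA)}$ and $f,f_n\in\cF_{(X,\cA)}$ with $f_n\xrightarrow{s-\mu} f$, and fix an arbitrary $t\in(0,1]$. The key observation is the set inclusion
\[
\lbrace |f_n-f|\ge t\rbrace \subseteq \lbrace |f_n-f|> 0\rbrace,
\]
which holds because $t>0$, so any point where $|f_n-f|\ge t$ certainly satisfies $|f_n-f|>0$. Since $\mu$ is a capacity, it is non-decreasing with respect to set inclusion, hence
\[
0\le \mu\big(\lbrace |f_n-f|\ge t\rbrace\big)\le \mu\big(\lbrace |f_n-f|> 0\rbrace\big).
\]
Letting $n\to\infty$ and using $f_n\xrightarrow{s-\mu} f$, the right-hand side tends to $0$, so by the squeeze principle $\lim_{n\to\infty}\mu(\lbrace |f_n-f|\ge t\rbrace)=0$. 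As $t\in(0,1]$ was arbitrary, this is exactly $f_n\xrightarrow{\mu} f$. This part is routine and presents no real obstacle; the only thing to check is that monotonicity of $\mu$ is the sole property used.

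For the converse, I would construct an explicit example where convergence in $\mu$ holds but strict convergence fails. The natural candidate is a sequence whose ``support of difference'' has a fixed positive capacity while the measure of the level sets $\lbrace |f_n-f|\ge t\rbrace$ shrinks to $0$ for each $t>0$. A clean choice is to take $f\equiv 0$ and $f_n = \frac{1}{n}\,\mI{A}$ for a fixed set $A$ with $\mu(A)>0$; then $\lbrace |f_n-f|>0\rbrace = A$ for all $n$, so $\mu(\lbrace|f_n|>0\rbrace)=\mu(A)>0$ does not converge to $0$, while for each fixed $t\in(0,1]$ we have $\lbrace |f_n|\ge t\rbrace=A$ once $1/n\ge t$ and $=\emptyset$ once $1/n<t$, giving $\mu(\lbrace|f_n|\ge t\rbrace)=0$ for all large $n$. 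Hence $f_n\xrightarrow{\mu} 0$ but not $f_n\xrightarrow{s-\mu} 0$.

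The only delicate point is to supply a concrete measurable space and capacity making this rigorous. I would take, say, $X=\lbrace 1\rbrace$ or more safely $X=[0,1]$ (or any two-point space) with $\cA=2^X$, fix a proper nonempty $A\in\cA$, and define $\mu(B)=1$ if $A\subseteq B$ and $\mu(B)=0$ otherwise, or simply $\mu(B)=\mathbbm{1}_{\lbrace B\cap A\neq\emptyset\rbrace}$; one must verify this is a genuine capacity (monotone, $\mu(\emptyset)=0$, $\mu(X)=1$), which is immediate. The main thing to watch is ensuring all functions land in $[0,1]$ and that the chosen $\mu$ really assigns positive value to $A$ but value $0$ to the eventually-empty level sets, so I would double-check the capacity definition against the computation above rather than grinding through alternatives.
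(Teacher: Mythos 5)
Your proof is correct and follows essentially the same route as the paper: the forward implication via monotonicity of $\mu$ applied to the inclusion $\lbrace |f_n-f|\ge t\rbrace \subseteq \lbrace |f_n-f|>0\rbrace$, and a counterexample built from functions taking a small positive constant value on a set of positive capacity. The only difference is that the paper takes $f_n\equiv a_n$ on all of $X$ (i.e.\ your construction with $A=X$), which makes the counterexample work simultaneously for \emph{every} measurable space and every capacity --- a slightly stronger conclusion that matters for Problem~9.4 --- whereas your version pins down one specific $(X,\cA,\mu)$, which is all the stated theorem requires.
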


\begin{proof} Since  
$\mu\big(\lbrace |f_n-f|\ge t\rbrace \big)\le \mu\big(\lbrace |f_n-f|>0\rbrace \big)$ for every $t>0$, the convergence $f_n\xrightarrow{s-\mu} f$
implies $f_n\xrightarrow{\mu} f.$ The reverse implication is false. Indeed,  
let $(X,\cA)\in\cS$ and $\mu\in \cM_{(X,\cA)}.$ Put $f_n(x)=a_n$ for $x\in X$, where $\lim_{n\to\infty} a_n=0$ and $a_n>0$ for all $n$.
Then  $f_n\xrightarrow{\mu} 0,$ but the sequence $(f_n)$ does not 
converge strictly in  $\mu$ to $f=0.$ 
\end{proof}

\medskip
{\bf Problem 9.5} {\it
For which class of semicopulas (of capacities, eventually) is strict convergence in measure equivalent to mean convergence?
}\medskip

\begin{tw} 
If   $f_n\xrightarrow{s-\mu} f$ then  $f_n\xrightarrow{\bIs} f$
for all $(X,\cA)\in\cS,$ all $\mu\in \cM_{(X,\cA)}$ and all $f,f_n\in \cF_{(X,\cA)}.$ The converse implication does not hold. 
\end{tw}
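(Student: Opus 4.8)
The plan is to prove the forward implication by a single uniform estimate on the integrand and to refute the converse with the same type of constant-function example used in Theorem~\ref{tw1}.

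For the forward direction, write $g_n:=|f_n-f|$, so that $f_n\xrightarrow{s-\mu}f$ means $\mu(\{g_n>0\})\to 0$. The key step is the bound, valid for every $t\in[0,1]$,
\[
\rS\big(t,\mu(\{g_n\ge t\})\big)\le \mu(\{g_n>0\}).
\]
For $t=0$ this is clear since $\rS(0,\cdot)=0$. For $t>0$ the inclusion $\{g_n\ge t\}\subseteq\{g_n>0\}$ together with monotonicity of $\mu$ gives $\mu(\{g_n\ge t\})\le\mu(\{g_n>0\})$, and then the semicopula inequality $\rS(x,y)\le x\wedge y\le y$ yields $\rS(t,\mu(\{g_n\ge t\}))\le\mu(\{g_n\ge t\})\le\mu(\{g_n>0\})$. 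Taking the supremum over $t$ gives $\bIs(\mu,g_n)\le\mu(\{g_n>0\})$, and the right-hand side tends to $0$; hence $f_n\xrightarrow{\bIs}f$. I would stress that this estimate holds for every semicopula, which is what makes the forward direction uniform in $\rS$ and already signals that no class of semicopulas can reverse it.

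For the converse, I would reuse the construction from Theorem~\ref{tw1}: fix any $(X,\cA)\in\cS$ and $\mu\in\cM_{(X,\cA)}$, put $f=0$ and $f_n\equiv a_n$ with $a_n>0$ and $a_n\to 0$. Since $\{|f_n-f|>0\}=X$ for every $n$, we get $\mu(\{|f_n|>0\})=1$, so strict convergence fails. On the other hand $\{f_n\ge t\}=X$ for $t\le a_n$ and $\{f_n\ge t\}=\emptyset$ for $t>a_n$, so by the neutral element property $\rS(t,1)=t$ one computes $\bIs(\mu,f_n)=\sup_{t\le a_n}t=a_n\to 0$, which gives mean convergence.

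There is no substantial obstacle here: the only point requiring care is that the supremum defining $\bIs$ must be controlled simultaneously for all $t$ by one vanishing quantity, which is exactly what the bound $\rS(t,\cdot)\le\mu(\{g_n>0\})$ provides. Conceptually, the counterexample works because the integral discounts small heights through $\rS(t,y)\le t$, so a sequence concentrated at small values can converge in mean even though its support retains full measure.
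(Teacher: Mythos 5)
Your proof is correct, and the forward direction takes a genuinely different route from the paper's. The paper first invokes Theorem~\ref{tw1} to pass to convergence in $\mu$, and then controls $\sup_{0\le t\le 1}\big(t\wedge \mu(\{|f_k-f|\ge t\})\big)$ via the monotonicity of $t\mapsto \mu(\{|f_k-f|\ge t\})$ (splitting small $t$ from large $t$ in an $\varepsilon$-argument). You instead exploit the full strength of the hypothesis directly: since $\{|f_n-f|\ge t\}\subseteq\{|f_n-f|>0\}$ for $t>0$ and $\rS(x,y)\le y$, you get the single uniform inequality $\bIs\big(\mu,|f_n-f|\big)\le \mu\big(\{|f_n-f|>0\}\big)$, which immediately yields the conclusion. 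Your argument is shorter, avoids the detour through convergence in measure, and produces an explicit quantitative bound; the paper's route is the one you would need if only convergence in $\mu$ (rather than strict convergence) were assumed, so it proves slightly more along the way. Your counterexample for the failure of the converse is the same constant-sequence construction as the paper's, with the same computation $\bIs(\mu,f_n)=\rS(a_n,1)=a_n\to 0$ against $\mu(\{f_n>0\})=1$.
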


\begin{proof}
From Theorem \ref{tw1} it follows that $\mu\big(\lbrace |f_n-f|\ge t\rbrace \big)\to 0$ as $n\to \infty$ for every $t>0$.
The function $t\to \mu\big(\lbrace |f_n-f|\ge t\rbrace \big)$ is non-increasing, so 
for every $\varepsilon>0$ there exists $n$ such that for all $k\ge n$
$$
\sup _{0\le t\le 1}\big(t\wedge  \mu(\lbrace |f_k-f|\ge t\rbrace )\big)\le \varepsilon.
$$
Since $S(a,b)\le a\wedge b$, we get  $\bIs\big(\mu,|f_n-f|\big)\to 0$ as $n\to \infty.$

The implication in the opposite direction is not true. In fact, 
put $f_n(x)=a_n$ for all $x\in X$, where $\lim_{n\to\infty} a_n=0$ and $a_n>0$ for all $n.$ Observe that   
\begin{align*}
\lim_{n\to\infty} \bIs\big(\mu,f_n\big)&=\lim_{n\to\infty}\sup _{0\le t\le a_n} \rS\big(t,\mu (X)\big)\\
&=\lim_{n\to\infty}\rS\big(a_n,1\big)=\lim_{n\to\infty} a_n=0,
\end{align*}
so $f_n\xrightarrow{\bIs} 0,$ but $\lim_{n\to\infty} \mu\big(|f_n|>0)=1,$
which completes the proof.
\end{proof}

\end{document}